\DeclareSymbolFont{legacymaths}{OT1}{cmr}{m}{n}
\let\orgdescriptionlabel\descriptionlabel
\renewcommand*{\descriptionlabel}[1]{%
  \let\orglabel\label
  \let\label\@gobble
  \phantomsection
  \protected@edef\@currentlabel{#1}%
  \let\label\orglabel
  \orgdescriptionlabel{#1}%
}
\def\paragraph{
	\@startsection{paragraph}{4}
	\z@{.5\linespacing\@plus.7\linespacing}{-.5em}%
	{\normalfont\itshape}}
\DeclareFontFamily{U}{mathx}{\hyphenchar\font45}
\DeclareFontShape{U}{mathx}{m}{n}{
      <5> <6> <7> <8> <9> <10>
      <10.95> <12> <14.4> <17.28> <20.74> <24.88>
      mathx10
      }{}
\DeclareSymbolFont{mathx}{U}{mathx}{m}{n}
\DeclareMathAccent{\widecheck}{0}{mathx}{"71}
\newcommand{\dif}{\,\mathrm{d}}
\DeclareMathOperator{\disc}{Disc}
\DeclareMathOperator{\Reg}{Reg}
\DeclareMathOperator{\Nr}{Nr}
\DeclareMathOperator{\diag}{diag}
\DeclareMathOperator{\Res}{Res}
\newcommand{\Cl}{\mathrm{Cl}}
\renewcommand{\det}{\operatorname{det}} 
\newlength{\faktorheight}
\newcommand*{\dfaktor}[3]{
\mathchoice{
 \settototalheight{\faktorheight}{\ensuremath{#1}}%
  \raisebox{-0.5\faktorheight}{\ensuremath{#1}}
   \backslash
   \settototalheight{\faktorheight}{\ensuremath{#2}}%
  \raisebox{0.5\faktorheight}{\ensuremath{#2}}
  \slash
   \settototalheight{\faktorheight}{\ensuremath{#3}}%
  \raisebox{-0.5\faktorheight}{\ensuremath{#3}}
  }{
  \ensuremath{#1}
  \backslash
  \ensuremath{#2}
  \slash
  \ensuremath{#3}
  }
  {\let\GenerateWarning=\errmessage}
  {\let\GenerateWarning=\errmessage}
}
\newcommand*{\lfaktor}[2]{
\mathchoice{
 \settototalheight{\faktorheight}{\ensuremath{#1}}%
  \raisebox{-0.5\faktorheight}{\ensuremath{#1}}
	\backslash
   \settototalheight{\faktorheight}{\ensuremath{#2}}%
  \raisebox{0.5\faktorheight}{\ensuremath{#2}}
}
{
	\ensuremath{#1}
	\backslash
	\ensuremath{#2}
}
{\let\GenerateWarning=\errmessage}
{\let\GenerateWarning=\errmessage}
}
\newcommand*{\faktor}[2]{
\mathchoice{
 \settototalheight{\faktorheight}{\ensuremath{#1}}%
  \raisebox{0.5\faktorheight}{\ensuremath{#1}}
  \slash
   \settototalheight{\faktorheight}{\ensuremath{#2}}%
  \raisebox{-0.5\faktorheight}{\ensuremath{#2}}
}
{
	\ensuremath{#1}
	\slash
	\ensuremath{#2}
}
{\let\GenerateWarning=\errmessage}
{\let\GenerateWarning=\errmessage}
}
\newtheorem{thm}{Theorem}
\numberwithin{thm}{section} 
\newtheorem*{thm*}{Statement of Theorem}
\newtheorem*{thm-quote}{Theorem}
\newtheorem{lem}[thm]{Lemma}
\newtheorem{prop}[thm]{Proposition}
\newtheorem{cor}[thm]{Corollary}
\theoremstyle{definition}
\newtheorem{defi}[thm]{Definition}
\theoremstyle{remark}
\newtheorem{remark}[thm]{Remark}
\title[Non-vanishing of Class Group $L$-functions]{Non-vanishing of Class Group $L$-functions for Number Fields with a Small Regulator}
\author[I. Khayutin]{Ilya Khayutin}
\address{Mathematics Department, Northwestern University, 2033 Sheridan Road
Evanston, IL 60208}
\subjclass[2010]{11M41, 11E45}
\keywords{L-function, Class group, Non-vanishing, Escape of mass, Equidistribution, Periodic torus orbit}
\begin{document}

\begin{abstract}
Let $E/\mathbb{Q}$ be a number field of degree $n$. We show that if $\Reg(E)\ll_n |\disc(E)|^{1/4}$ then the fraction of class group characters for which the Hecke $L$-function does not vanish at the central point is $\gg_{n,\varepsilon} |\disc(E)|^{-1/4-\varepsilon}$. 

The proof is an interplay between almost equidistribution of Eisenstein periods over the toral packet in $\mathbf{PGL}_n(\mathbb{Z})\backslash\mathbf{PGL}_n(\mathbb{R})$ associated to the maximal order of $E$, and the escape of mass of the torus orbit associated to the trivial ideal class. 
\end{abstract}
\maketitle

\section{Introduction}
Our main result is the following theorem.
\begin{thm}\label{thm:main}
Let $E/\mathbb{Q}$ be a number field of degree $n$. Denote by $D$ its discriminant, by $R$ the regulator of its ring of integers and by $h$ the class number. 
For every class group character $\chi\in \widehat{\Cl(E)}$ let $L(s,\chi)$ be the associated Hecke $L$-function.

Fix a real number $1/2\leq s<1$. There are effectively computable constants $A,B>0$ that depends only on $s,n$ such that for every $1/2>\varepsilon>0$
\begin{equation*}
h^{-1}\#\left\{\chi \in \widehat{\Cl(E)} \mid L(s,\chi)\neq 0 \right\} 
\geq |D|^{-(1-s+\varepsilon)/2} \left(A-B \frac{R}{|D|^{s/2}}\right) \varepsilon^n\;.
\end{equation*}
\end{thm}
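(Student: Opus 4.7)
\emph{Strategy.} I would use a first and second moment argument. The key input is a Hecke-type period formula expressing the $L$-values as Eisenstein periods over the toral packet $\{[T]_{\mathfrak{a}}\}_{[\mathfrak{a}]\in \Cl(E)}$ in $X=\mathbf{PGL}_n(\mathbb{Z})\backslash \mathbf{PGL}_n(\mathbb{R})$ associated to the maximal order $\mathcal{O}_E$:
$$L(s,\chi) \;=\; c_E(s)\sum_{[\mathfrak{a}]\in \Cl(E)} \chi([\mathfrak{a}])\,I_{\mathfrak{a}}(s), \qquad I_\mathfrak{a}(s) := \int_{[T]_\mathfrak{a}} E(g,s)\,d\mu(g),$$
for the standard spherical Eisenstein series $E(\cdot,s)$. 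Cauchy--Schwarz then reduces the statement to controlling the ratio
$$\#\{\chi : L(s,\chi)\neq 0\}\;\geq\; \frac{\bigl|\sum_\chi L(s,\chi)\bigr|^{2}}{\sum_\chi |L(s,\chi)|^{2}}.$$

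\emph{First moment.} Orthogonality of characters collapses the numerator to $h^{2}|c_E(s)|^{2}|I_0(s)|^{2}$, where $I_0(s)$ is the Eisenstein period over the single orbit $[T]_{0}$ of the trivial ideal class. Geometrically $[T]_{0}$ is the image of a fundamental domain for the Minkowski embedding of $\mathcal{O}_E^{\times}$, of covolume $R$, and sits deep in the cusp of $X$ whenever $R$ is small. Since $E(\cdot,s)$ grows like $y^{s}$ at height $y$ in the cusp, $|I_0(s)|$ is bounded below by the mass of $[T]_{0}$ at heights $\gtrsim |D|^{\varepsilon}$ multiplied by this growth, and a quantitative escape-of-mass estimate for the unit lattice of $\mathcal{O}_E$ turns this into a lower bound of the form $|I_0(s)|\gg a - bR/|D|^{s/2}$. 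Squaring produces the factor $A-BR/|D|^{s/2}$ after consolidating constants.

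\emph{Second moment.} Expanding the square and summing by orthogonality gives
$$\sum_\chi |L(s,\chi)|^2 \;=\; h\,|c_E(s)|^2 \sum_{[\mathfrak{a}]\in \Cl(E)} |I_\mathfrak{a}(s)|^2.$$
Cauchy--Schwarz on each orbit bounds $|I_\mathfrak{a}(s)|^2 \leq R \cdot \int_{[T]_\mathfrak{a}} |E(g,s)|^{2}\, d\mu(g)$, and an almost-equidistribution statement for the full packet (with polynomial saving in $|D|$) allows the sum over $\mathfrak{a}$ of these $L^{2}$-integrals to be approximated by the Haar integral over $X$ against $|E|^{2}$. Truncating the Eisenstein series at height $T=|D|^{\varepsilon}$ and invoking a Maass--Selberg / constant-term calculation then yields $\sum_\chi |L(s,\chi)|^{2} \ll h\cdot |D|^{(1-s+\varepsilon)/2}$; the factor $\varepsilon^n$ emerges from horospherical volumes in the cusp of rank $n-1$.

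\emph{Main obstacle.} The two moments impose opposing demands on the same Eisenstein series: the first moment wants a large cuspidal contribution while the second wants a small one. The argument exploits the fact that the trivial-class orbit concentrates in the cusp more aggressively than a generic orbit in the packet precisely when $R$ is small, creating a window in which the ratio of moments is favorable. The proof therefore hinges on (i) an effective equidistribution result for the full toral packet with polynomial error in $|D|$, and (ii) a separate and sharper lower bound on the cuspidal excursion of $[T]_{0}$ governed by the geometry of the unit lattice of $\mathcal{O}_E$. Balancing the truncation parameter $|D|^{\varepsilon}$ against both estimates produces the final exponent $(1-s+\varepsilon)/2$ and the linear dependence on $R/|D|^{s/2}$ appearing in the statement.
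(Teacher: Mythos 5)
Your first-moment computation is sound and matches the heart of the paper's argument: orthogonality isolates the period over the trivial-class orbit, and the lower bound $|I_0(s)|\gg A - B\,R/|D|^{s/2}$ via the cuspidal excursion of that single orbit (using that $\iota(\mathcal{O}_E)$ contains the short vector $(1,\dots,1)$, so the orbit lives at height $\gtrsim |D|^{1/(2n)}\|\exp_E(x)\|^{-1}$, combined with Minkowski's second theorem for the unit lattice) is exactly the paper's key Proposition. The gap is in your second moment. You need $\sum_\chi |L(s,\chi)|^2 \ll h\,|D|^{(1-s+\varepsilon)/2}$, and you propose to get it from ``an almost-equidistribution statement for the full packet with polynomial saving in $|D|$.'' No such effective equidistribution of toral packets in $\mathbf{PGL}_n(\mathbb{Z})\backslash\mathbf{PGL}_n(\mathbb{R})$ is known unconditionally for $n>2$ (the paper states this explicitly; it is the reason the argument is structured the way it is). Without that input, the only unconditional bound on the denominator is termwise convexity, $\sum_\chi|L(s,\chi)|^2 \leq h\max_\chi|L(s,\chi)|^2 \ll h\,|D|^{1-s+\varepsilon}$, and your Cauchy--Schwarz ratio then yields $h^{-1}\#\{L\neq 0\}\gg |D|^{-(1-s+\varepsilon)}(\cdots)$ --- the exponent is doubled and the result is strictly weaker than the theorem. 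Your claimed second-moment bound is in effect a subconvexity-on-average statement of the same strength as the equidistribution you cannot prove; note also that Parseval reduces it to $\sum_{[\mathfrak a]}|I_{\mathfrak a}(s)|^2$, and after your per-orbit Cauchy--Schwarz the quantity $\sum_{[\mathfrak a]}\int_{[T]_{\mathfrak a}}|E|^2$ is controlled by how much mass the \emph{whole} packet gives to the cusp, which is again the unknown non-escape-of-mass statement.

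The repair is to abandon the $L^2$ duality in favor of $L^1$--$L^\infty$ duality: for $f=Z\colon \Cl(E)\to\mathbb{C}$ with $\hat Z(\chi)$ proportional to $L^*(s,\chi)$, one has
\begin{equation*}
\#\{\chi : \hat Z(\chi)\neq 0\} \;\geq\; \frac{\|Z\|_\infty}{\|\hat Z\|_\infty}
\;\geq\; \frac{|Z(\mathcal{O}_E)|}{\max_\chi|\hat Z(\chi)|}\;,
\end{equation*}
which follows by writing $Z(c)=\sum_{\chi\in\mathrm{NV}}\hat Z(\chi)\chi(c)$ at a maximizing $c$. The denominator now requires only the \emph{individual} convexity bound $|L(s,\chi)|\ll_{s,n}|D|^{(1-s+\varepsilon)/2}\zeta(1+\varepsilon)^n$ for each character --- no averaging, no equidistribution --- and dividing your first-moment lower bound by it gives exactly the exponent $-(1-s+\varepsilon)/2$ and the factor $\varepsilon^n$ (from $\zeta(1+\varepsilon)^{-n}$, not from horospherical volumes). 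This is the sense in which the paper combines a weak equidistribution statement (convexity) with the escape of mass of the single trivial-class orbit, rather than needing either in a strong form.
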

The most interesting point is of course $s=1/2$ as GRH would imply non-vanishing of $L(s,\chi)$ at $1/2<s<1$ for all $\chi$. Fr\"ohlich \cite{Frohlich} has demonstrated that the Dedekind zeta function actually vanishes at the central point for infinitely many number fields. Duke \cite{DukeLarge} has constructed for each $n$ an infinite family of degree $n$ totally real $S_n$ number fields such that $R\ll (\log |D|)^{(n-1)}$.

There is a very rich literature about non-vanishing of $L$-functions at the central point for several families of $L$-functions. In this exposition we restrict our discussion to class group $L$-functions and closely related families. Blomer \cite{Blomer} has established a very strong result for the family of class group $L$-functions of imaginary quadratic fields. He is able to demonstrate non-vanishing for a large fraction of the class group characters, $\gg \varphi(|D|)/|D|$, whenever $|D|\gg 1$. Theorem \ref{thm:main} provides significantly weaker results for imaginary quadratic fields but it covers class group $L$-functions of any degree. In the conductor aspect, Balasubramanian and Murty \cite{BalasubramanianMurty} established that a positive proportion of Dirichlet $L$-function of prime conductor $q\gg 1$ do not vanish at the central point.
Soundararajan \cite{SoundNonVanishing} has established that a positive proportion of Dedekind zeta functions of real quadratic fields do not vanish at the central point. Methodologically, the work of Michel and Venkatesh \cite{MichelVenkatesh} about non-vanishing of twists of automorphic $\mathbf{GL}_2$ $L$-functions by quadratic class group characters is the closest to ours.

We remark that predictions about the behavior of $L$-functions at the central point can often be deduced from random matrix theory heuristics \cite{KatzSarnak,SarnakShinTemplier,ShankarSodergrenTemplier}. Moreover, the non-vanishing phenomena is related to deep questions in analytic number theory, such as the existence of Landau-Siegel zeros \cite{IwaniecSarnak} and spectral gap for automorphic representations \cite{LuoRudnickSarnakI,LuoRudnickSarnakII}.

Three aspects of Theorem \ref{thm:main} stand out. The first is that the result is valid for number fields of any degree. The second is that we allow relatively large regulators. In particular, whenever $R=o(|D|^{1/4})$ Theorem \ref{thm:main} provides new non-vanishing results at the central point $s=1/2$. Finally, the non-vanishing fraction depends only on the discriminant and the regulator, and does not depend on the shape of the unit lattice. Specifically, we do not need to assume that the number field $E$ has no non-trivial subfields of a small regulator. The latter assumption is needed in the course of the proof of \cite[Theorem 1.10]{ELMVPeriodic} which is conceptually related to our method. Finally, it is worth mentioning that the constants $A,B$ are completely effective, and do not depend on Siegel's bound, cf.\ \cite{Blomer} where the lower bound for $|D|$ is ineffective. 
 
\subsection{Subconvexity}
Some improvements of the lower bound in Theorem \ref{thm:main} for $s=1/2$ are easily achievable.
\begin{enumerate}
\item Using the weak subconvexity bound of Soundararajan \cite{Sound} we can deduce a lower bound with a logarithmic improvement for all number fields
\begin{equation*}
h^{-1}\#\left\{\chi \in \widehat{\Cl(E)} \mid L(1/2,\chi)\neq 0 \right\} \gg_{n,\varepsilon} |D|^{-1/4}(\log |D|)^{1-\varepsilon}\left(A-B \frac{R}{|D|^{1/4}}\right)\;. 
\end{equation*}

\item Whenever there is $\delta>0$ such that a subconvex bound in the discriminant aspect 
\begin{equation*}
|L(1/2,\chi)| \ll_{n,\varepsilon} |D|^{(1/2-\delta+\varepsilon)/2}
\end{equation*}
is known, 
we can improve the lower bound to
\begin{equation*}
h^{-1}\#\left\{\chi \in \widehat{\Cl(E)} \mid L(1/2,\chi)\neq 0 \right\} \gg_{n,\varepsilon} |D|^{-(1/2-\delta+\varepsilon)/2} \left(A-B \frac{R}{|D|^{1/4}}\right)\;. 
\end{equation*}
The Grand Lindel\"of Hypothesis would provide the optimal $\delta=1/2$. A non-trivial $\delta>0$ is known unconditionally for abelian fields $E$ using the Burgess bound \cite{BurgessII} and for cubic fields using the convexity breaking results of Duke, Friedlander and Iwaniec \cite{DFI} and Blomer, Harcos, Michel \cite{BHM}.
\end{enumerate}

\subsection{Method of Proof}
To study the Hecke $L$-function at the critical strip we follow Hecke's original method \cite{Hecke}. That is, we represent the $L$-function as an integral of a spherical degenerate Eisenstein series $E(\bullet,s)\colon\lfaktor{\mathbf{PGL}_n(\mathbb{Z})}{\mathbf{PGL}_n(\mathbb{R})}\to\mathbb{C}$ along a collection of periodic torus orbits. This spherical Eisenstein series coincides with the Epstein zeta function of the associated quadratic form. The definition and properties of the Epstein zeta function are reviewed in \S\ref{sec:Epstein-cusp}.

Our strategy is most closely related to the methods of Michel and Venkatesh \cite{MichelVenkatesh} who study non-vanishing at the central point for twists of $\mathbf{GL}_2$ automorphic $L$-functions by quadratic class group characters. They provide two tools to establish non-vanishing in a family, either using effective equidistribution of a packet of Heegner points on the modular curve, or using the escape of mass of a portion of the packet that contains the trivial ideal class. Unfortunately, in higher rank we do not know unconditionally an effective equidistribution result of the analogues toral packets nor do we know that a large enough portion of the mass escapes to infinity, even for small regulators. Instead, we observe that combining very weak versions of both statement \emph{together} is sufficient to establishing the non-vanishing theorem. The equidistribution statement is weakened to the convexity bound of Hecke $L$-functions. It is supplemented with a good control on the mass that the single orbit of the trivial ideal class element spends high in the cusp. 

In section \S \ref{sec:periods} we construct a maximal torus $H<\mathbf{PGL}_n(\mathbb{R})$ from a fixed 
degree $n$ number field $E$ and an algebra isomorphism $\iota\colon E\otimes \mathbb{R}\to \mathbb{R}^{r_1}\times \mathbb{C}^{r_2}=\mathbb{R}^n$.
Every fractional ideal $\Lambda \subset E$ gives rise to a periodic $H$-orbit which we denote by $\tensor[^\iota]{\Lambda}{}H\subset \lfaktor{\mathbf{PGL}_n(\mathbb{Z})}{\mathbf{PGL}_n(\mathbb{R})}$, cf.\ \cite{ELMVPeriodic}. This periodic orbit depends only on the ideal class of $\Lambda$. We recall these classical definitions as well in \S\ref{sec:periods}. Fix $1/2\leq s < 1$ and define the function $Z\colon \Cl(E)\to\mathbb{C}$
\begin{align*}
Z(\Lambda)&\coloneqq\int_{[H]} E^*(\tensor[^\iota]{\Lambda}{}h,ns) \dif^\times h\;,\\
E^*(g,s)&\coloneqq \pi^{-s/2}\Gamma\left(\frac{s}{2}\right) E(g,s)\;,\\
E\left(g,s\right)&\coloneqq\frac{1}{2}|\det g|^{s/n} \sum_{0\neq \mathrm{v}\in \mathbb{Z}^n} \|\mathrm{v} g \|_2^{-s}\;.
\end{align*} 
The function $E\left(g,s\right)$ is the Epstein zeta function associated to the lattice $\mathbb{Z}^n g$ and
$E^*(g,\rho)$ is the completed Epstein zeta function. The integration is with respect to the $H$-periodic measure of volume $1$. Hecke's period formula, cf.\ Theorem \ref{thm:Hecke-period}, expresses this integral in terms of a completed partial Dedekind zeta function
\begin{equation*}
Z(\Lambda)=\frac{w}{2^{r_1} n R} \zeta^*_{\Lambda}(s)
\end{equation*}
whose definition we recall in \ref{defi:partial-dedekind}. The Fourier coefficient of this function coincides with the completed $L$-function of the class group character
\begin{equation*}
\hat{Z}(\chi)=\frac{w}{2^{r_1} n h R} L^*(s,\chi)
\end{equation*}
for any $\chi\in\widehat{\Cl(E)}$.

In Theorem \ref{thm:Epstein-cusp} we establish a good lower bound on the Epstein zeta function high in the cusp using an approximate functional equation. This lower bound and the fact that the lattice $\iota(\mathcal{O}_E)\subset \mathbb{R}^n$ contains the short vector $(1,\ldots,1)$ are used in the proof of the key statement of this manuscript -- Proposition \ref{prop:Z(O)-bound}. This proposition states that there are effectively computable constants $A_1,B_0>0$ such that
\begin{equation*}
Z(\mathcal{O}_E) \geq \frac{A_1|D|^{s/2}-B_0 R}{R}\;.
\end{equation*}
The proof of this result also uses a trick where the unit lattice is approximated by the lattice spanned by vectors realizing its successive minima. This allows us to remove the dependence on the shape of the unit lattice. V.\ Blomer has later suggested to the author a briefer proof of Proposition \ref{prop:Z(O)-bound} by applying the approximate functional equation directly to the partial Dedekind zeta function. The proof presented here emphasizes the role of the crucial concept of escape of mass.

Without further ado we establish Theorem \eqref{thm:main} assuming this result and using the following elementary lemma.  
\begin{lem}\label{lem:non-vanishing}
Let $C$ be a finite abelian group. For every function $f\colon C\to\mathbb{C}$ define 
\begin{equation*}
\operatorname{NV}(f)\coloneqq\left\{\chi \in \widehat{C} \mid  \hat{f}(\chi)\neq 0 \right\}\;.
\end{equation*}
Then
\begin{equation*}
 \#\operatorname{NV}(f) \geq \frac{\|f\|_\infty}{\|\hat{f}\|_\infty}\;.
\end{equation*}
\end{lem}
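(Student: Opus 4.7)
The plan is to derive the bound directly from Fourier inversion on the finite abelian group $C$, exploiting the trivial observation that only characters in $\operatorname{NV}(f)$ contribute to the inverse transform of $\hat{f}$.

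For any $c \in C$, I would write the inversion formula
\begin{equation*}
f(c) = \sum_{\chi \in \widehat{C}} \hat{f}(\chi)\, \chi(c),
\end{equation*}
normalized compatibly with the Fourier transform convention fixed elsewhere in the paper (so that the prefactor of $\#C$ is absorbed into the definition of $\hat{f}$, matching the formula $\hat{Z}(\chi) = \tfrac{w}{2^{r_1}nhR} L^*(s,\chi)$ that appears just above). Since $\hat{f}(\chi)=0$ for every $\chi \notin \operatorname{NV}(f)$, the sum is supported on $\operatorname{NV}(f)$.

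Applying the triangle inequality together with $|\chi(c)|=1$ yields
\begin{equation*}
|f(c)| \leq \sum_{\chi \in \operatorname{NV}(f)} |\hat{f}(\chi)| \leq \#\operatorname{NV}(f)\cdot \|\hat{f}\|_\infty,
\end{equation*}
and taking the supremum over $c \in C$ gives $\|f\|_\infty \leq \#\operatorname{NV}(f)\cdot \|\hat{f}\|_\infty$. Rearranging yields the claim. The whole argument is a one-liner once the inversion formula is in place; there is no real obstacle, and the only point worth flagging for the reader is the precise Fourier normalization used.
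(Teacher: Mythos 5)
Your proposal is correct and is essentially the paper's own argument: Fourier inversion restricted to the support $\operatorname{NV}(f)$, followed by the triangle inequality with $|\chi(c)|=1$. The only cosmetic difference is that the paper evaluates at a point where $|f|$ attains its maximum rather than taking a supremum at the end.
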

\begin{proof}
Fix $c\in C$ where $|f|$ attains its maximum. Then
\begin{equation*}
\|f\|_\infty=\left|\sum_{\chi\in\operatorname{NV}(f)} \hat{f}(\chi) \chi(c)\right|\leq \|\hat{f}\|_\infty \left|\operatorname{NV}(f)\right| 
\end{equation*}
\end{proof}

\begin{proof}[Proof of Theorem \ref{thm:main}]
We apply Lemma \ref{lem:non-vanishing} above to the function $Z\colon \Cl(E)\to\mathbb{C}$. Proposition \ref{prop:Z(O)-bound} provides the necessary lower bound on $\|Z\|_\infty$. We need only an appropriate upper-bound on $|\hat{Z}(\chi)|$ for any class-group character $\chi$. Recall the convexity bound for Hecke $L$-functions of class group characters, cf.\ \cite{Rademacher}. For every $0<\varepsilon<1/2$ and $1/2\leq s < 1$
\begin{equation*}
|L(s,\chi)|\ll_{s,n} |D|^{(1-s+\varepsilon)/2} \zeta(1+\varepsilon)^n\Longrightarrow |\hat{Z}(\chi)|\ll_{s,n} \frac{|D|^{(1+\varepsilon)/2}}{hR}\zeta(1+\varepsilon)^n \ll_n \frac{|D|^{(1+\varepsilon)/2}}{hR} \varepsilon^{-n}
\end{equation*}
Dividing the lower bound from Proposition \ref{prop:Z(O)-bound} by the convexity upper bound implies the claimed theorem. 
\end{proof}
The convexity bound should be understood as an almost-equidistribution statement for periods of degenerate Eisenstein series over toral packets. Indeed, any subconvex improvement in the discriminant aspect over the convexity bound would imply the equidistribution of any degenerate pseudo-Eisenstein series. If $n$ is a prime then such a subconvexity bound can be bootstrapped using the method of Einsiedler, Lindenstrauss, Michel and Venkatesh \cite{ELMVCubic} to equidistribution of any compactly supported continuous function.

\subsection*{Acknowledgments}
I wish to deeply thank Peter Sarnak, Vesselin Dimitrov and Elon Lindenstrauss for very fruitful discussions about this topic. I am extremely grateful to Valentin Blomer for helpful and insightful comments regarding a previous version of this manuscript. I wish to thank the referees for carefully reading the manuscript, their suggestions have notably improved the presentation. This work has been supported by the National Science Foundation under Grant No. DMS-1946333.

\section{The growth of the Epstein zeta function in the cusp}\label{sec:Epstein-cusp}
\begin{defi}
Define the degenerate spherical Eisenstein series with complex parameter $s$ and $g\in\mathbf{GL}_n(\mathbb{R})$
\begin{equation*}
E\left(g,s\right)=\frac{1}{2}|\det g|^{s/n} \sum_{0\neq \mathrm{v}\in \mathbb{Z}^n} \|\mathrm{v} g \|_2^{-s}\;.
\end{equation*}
\end{defi}
This function coincides with the Epstein zeta function\footnote{Our normalization for the Epstein zeta function is different from \cite{Terras} where $Z(g\cdot \tensor[^t]{g}{},\rho)=|\det g|^{-2\rho/n}E(g,2\rho)$.} of the quadratic form with Gram matrix $g\cdot \tensor[^t]{g}{}$. The series converges absolutely for $\Re s>n$ and can be analytically continued to a meromorphic function of $s\in \mathbb{C}$. The unique pole of $E\left(g,s\right)$ is at $s=n$. This pole is simple with residue\footnote{This can be deduced from the fact that the number of lattice points in a sphere of radius $R$ is asymptotic to the volume of the sphere as $R\to \infty$ for any unimodular lattice.}
\begin{equation*}
\Res_{s=n}E\left(g,s\right)=
\frac{\pi^{n/2}}{\Gamma(n/2)}\;.
\end{equation*}
The constant on the right is half the surface area of the $(n-1)$-dimensional unit sphere. Notice that it does not depend on $g$.

We have normalized $E(g,s)$ using the determinant to make it a well-defined function on $\lfaktor{\mathbf{PGL}_n(\mathbb{Z})}{\mathbf{PGL}_n(\mathbb{R})}$. 

\begin{defi}
We will also make use of a completed version of $E(g,s)$ defined as
\begin{equation*}
E^*(g,s)=\pi^{-s/2}\Gamma\left(\frac{s}{2}\right) E(g,s)\;.
\end{equation*}
\end{defi}

The functional equation \cite{Epstein}, cf.\ \cite[Proposition 10.2]{ELMVCubic}, is especially simple for the completed Eisenstein series
\begin{equation*}
E^*(g,n-s)=E^*(\tensor[^t]{g}{^{-1}},s)
\end{equation*}
It follows that $E^*(g,s)$ is holomorphic in $s$ except for two simple poles at $s=0,n$ with residues $-1,1$ respectively.

Our first goal is to understand the behavior of this function high in the cusp.
The following theorem due to Riemann for $n=1$ and  Terras \cite{Terras} for $n>1$ is a variant of the approximate functional equation for the Epstein zeta function. We provide a proof using Mellin inversion.
\begin{thm}\label{thm:approx-func-eq}
For any $0,n\neq s\in\mathbb{C}$  and $g\in\mathbf{GL}_n(\mathbb{R})$
\begin{equation*}
E^*(g,s)=-\frac{1}{s}-\frac{1}{n-s}+\frac{1}{2}\sum_{0\neq v \in \mathbb{Z}^n} f\left(s, \frac{\|vg\|_2}{|\det g|^{1/n}}\right)
+\frac{1}{2}\sum_{0\neq v \in \mathbb{Z}^n} f\left(n-s, \frac{\|v\tensor[^t]{g}{^{-1}}\|_2}{|\det \tensor[^t]{g}{^{-1}}|^{-1/n}}\right)
\end{equation*}
where 
\begin{equation*}
f(s,a)\coloneqq (\pi a^2)^{-s/2}\Gamma\left(\frac{s}{2},\pi a^2\right)
=\int_1^\infty t^{s/2}\exp(-\pi t a^2) \dif^\times t\;.
\end{equation*}
\end{thm}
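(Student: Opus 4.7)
The plan is to realize $E^*(g,s)$ as a Mellin transform of a theta series, split the resulting integral, and apply Poisson summation. Starting from the identity $\pi^{-s/2}\Gamma(s/2)a^{-s} = \int_0^\infty t^{s/2}e^{-\pi t a^2}\,\dif^\times t$ applied to each $a=\|vg\|_2$ and summed over $v$, the interchange being legitimate for $\Re s>n$ by absolute convergence, one obtains
\begin{equation*}
E^*(g,s) = \frac{d^s}{2}\int_0^\infty t^{s/2}\,\vartheta(g,t)\,\dif^\times t, \qquad \vartheta(g,t) \coloneqq \sum_{0\neq v\in\mathbb{Z}^n} e^{-\pi t \|vg\|_2^2},
\end{equation*}
with $d \coloneqq |\det g|^{1/n}$. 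Classical Poisson summation on $\mathbb{Z}^n$ applied to the Gaussian density then yields the theta inversion
\begin{equation*}
\vartheta(g,t) + 1 = \frac{1}{d^n t^{n/2}}\bigl(\vartheta(\tensor[^t]{g}{^{-1}}, 1/t) + 1\bigr).
\end{equation*}

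The next step is to split the $t$-integral at the self-dual point $T = d^{-2}$. On the tail $\int_T^\infty$, exchanging sum with integral and rescaling $\sigma = d^2 t$ turns the $v$-th summand into the defining integral of $f(s, \|vg\|_2/d)$, contributing $\tfrac{1}{2}\sum_{v\neq 0} f(s, \|vg\|_2/d)$. On the lower piece $\int_0^T$, I would substitute the theta inversion, change variable $t\mapsto 1/t$ in the $\vartheta(\tensor[^t]{g}{^{-1}},1/t)$ contribution to produce an integral over $[d^2,\infty)$, and then rescale by $d^2$ to recover the dual series $\tfrac{1}{2}\sum_{v\neq 0} f(n-s, d\|v\tensor[^t]{g}{^{-1}}\|_2)$. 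The residual $\tfrac{1}{d^n t^{n/2}} - 1$ correction terms left over from the inversion integrate explicitly (for $\Re s > n$) to
\begin{equation*}
\frac{d^s}{2}\int_0^T t^{s/2}\!\left(\frac{1}{d^n t^{n/2}} - 1\right)\dif^\times t \;=\; \frac{d^s}{2}\!\left(\frac{2T^{(s-n)/2}}{d^n(s-n)} - \frac{2T^{s/2}}{s}\right) \;=\; -\frac{1}{n-s} - \frac{1}{s},
\end{equation*}
supplying the two polar terms.

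This establishes the identity in the half-plane $\Re s > n$. To extend it to all $s\neq 0,n$, observe that the incomplete gamma factor forces $f(s,a) \ll_{\Re s} e^{-\pi a^2/2}$ for $a$ large, so each of the two series is entire in $s$ and absolutely and locally uniformly convergent on compacta. The right-hand side is therefore meromorphic on $\mathbb{C}$ with only the two displayed simple poles, matching the known analytic structure of $E^*(g,s)$, and the identity propagates by analytic continuation.

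The main obstacle is purely bookkeeping: one must track the determinantal normalizations through the two successive substitutions carefully enough that the $v$-summand in the dual series emerges with its argument correctly scaled by the determinant of $\tensor[^t]{g}{^{-1}}$, and confirm the signs of the boundary terms so that they combine into precisely $-1/s - 1/(n-s)$ --- the residues prescribed by the functional equation $E^*(g, n-s) = E^*(\tensor[^t]{g}{^{-1}}, s)$.
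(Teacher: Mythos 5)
Your proof is correct, but it takes a genuinely different route from the paper. You run the classical Riemann--Terras theta-splitting argument: Mellin-represent $E^*(g,s)$ as $\frac{d^s}{2}\int_0^\infty t^{s/2}\vartheta(g,t)\dif^\times t$, split at the self-dual point $T=d^{-2}$, apply Poisson summation on the lower range, and collect the boundary terms $-\frac1s-\frac1{n-s}$; the computations (the rescalings $\sigma=d^2t$, the dual argument $|\det g|^{1/n}\|v\tensor[^t]{g}{^{-1}}\|_2$, and the explicit evaluation of the residual integral) all check out, and the extension beyond $\Re s>n$ by locally uniform convergence of the two incomplete-gamma series plus analytic continuation is sound. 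The paper instead starts from the observation that the Mellin transform of $f(s,a)$ in $a$ is $\pi^{-\sigma/2}\Gamma(\sigma/2)(\sigma-s)^{-1}$, writes the first sum as a contour integral $\frac{1}{2\pi i}\int_{\Re\sigma=n+\delta}E^*(g,\sigma)(\sigma-s)^{-1}\dif\sigma$, shifts the contour to $\Re\sigma=-\delta$ collecting residues at $\sigma=0,s,n$, and identifies the shifted integral with the dual sum via the functional equation. The trade-off: the paper's contour-shift proof is shorter but must import the meromorphic continuation, the functional equation, and vertical decay of $E^*$ (via Phragm\'en--Lindel\"of in the critical strip) as prior inputs to justify the shift; your argument is more self-contained, needing only Poisson summation, and in fact re-derives the continuation and functional equation as byproducts of the manifestly $(g,s)\leftrightarrow(\tensor[^t]{g}{^{-1}},n-s)$-symmetric right-hand side. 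One cosmetic remark: your dual argument $|\det g|^{1/n}\|v\tensor[^t]{g}{^{-1}}\|_2=\|v\tensor[^t]{g}{^{-1}}\|_2/|\det(\tensor[^t]{g}{^{-1}})|^{1/n}$ is the normalization forced by Poisson summation and by the functional equation, and is the intended reading of the displayed formula.
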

\begin{proof}
The Mellin transform in the $a$ variable of $f(s,a)$ is exactly $\pi^{-\sigma/2}\Gamma\left(\frac{\sigma}{2}\right)(\sigma-s)^{-1}$. Because of the exponential decay of the Gamma function in the vertical direction we can use Mellin inversion to write
\begin{equation*}
\frac{1}{2}\sum_{0\neq v \in \mathbb{Z}^n} f\left(s, \frac{\|vg\|_2}{|\det g|^{1/n}}\right)=\frac{1}{2\pi i}\int_{\Re \sigma=n+\delta} \frac{E^*(g,\sigma)}{\sigma-s} \dif \sigma
\end{equation*}
for any $1>\delta>0$. We shift the contour of integration to the line $\Re \sigma=-\delta$ collecting residues at the $\sigma=0,s,n$. To justify the contour shift we claim that $E^*(g,s)$ decays exponentially in the vertical direction uniformly in any interval $a\leq \Re s \leq b$. To the right of the critical strip this follows from the definition using lattice summation and the exponential decay in the vertical direction of the Gamma function. To the left of the critical strip this can be deduced using the functional equation and inside the critical strip using the Phragm\'en-Lindel\"of principle.

The residue at $\sigma=s$ coincides with $E^*(g,s)$ and the other two residues produce the terms $-\frac{1}{s}$ and $-\frac{1}{n-s}$ in the claim. The proof is concluded by applying the functional equation and the change of variable $\sigma \mapsto n-\sigma$:
\begin{equation*}
-\frac{1}{2\pi i}\int_{\Re \sigma=-\delta} \frac{E^*(g,\sigma)}{\sigma-s} \dif \sigma
=-\frac{1}{2\pi i}\int_{\Re \sigma=-\delta} \frac{E^*(\tensor[^t]{g}{^{-1}},n-\sigma)}{\sigma-s} \dif \sigma
=\frac{1}{2\pi i}\int_{\Re \sigma=n+\delta} \frac{E^*(\tensor[^t]{g}{^{-1}},\sigma)}{\sigma-(n-s)} \dif \sigma\;.
\end{equation*}
The latter integral is equal to the dual sum because of Mellin inversion.
\end{proof}

\begin{cor}\label{cor:Epstein-pos}
For any real $s\neq 0,n$ the function $E^*(g,s)$ is real and satisfies
\begin{equation*}
E^*(g,s)\geq -\frac{1}{s}-\frac{1}{n-s}\;.
\end{equation*}
\end{cor}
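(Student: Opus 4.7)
The plan is to simply read off the conclusion from the approximate functional equation in Theorem \ref{thm:approx-func-eq}. First, I would observe that when $s$ is real, the two ``main terms'' $-1/s$ and $-1/(n-s)$ are manifestly real, and each summand
\[
f(s,a) = \int_1^\infty t^{s/2} \exp(-\pi t a^2)\dif^\times t
\]
is the integral of a nonnegative real-valued function (since $t \geq 1$ and $s,a$ are real, the integrand is real and positive), so $f(s,a) > 0$. The same holds for $f(n-s,a')$ with $a'$ the dual norm. In particular, the two lattice sums in Theorem \ref{thm:approx-func-eq} are real (termwise nonnegative) series, and one must only justify their absolute convergence at a fixed real $s\neq 0,n$, which is immediate from the Gaussian factor $\exp(-\pi t a^2)$ that gives super-polynomial decay in $a = \|vg\|_2/|\det g|^{1/n}$.

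Combining these observations with Theorem \ref{thm:approx-func-eq}, the identity
\[
E^*(g,s) + \frac{1}{s} + \frac{1}{n-s} = \frac{1}{2}\sum_{0\neq v \in \mathbb{Z}^n} f\!\left(s, \tfrac{\|vg\|_2}{|\det g|^{1/n}}\right) + \frac{1}{2}\sum_{0\neq v \in \mathbb{Z}^n} f\!\left(n-s, \tfrac{\|v\tensor[^t]{g}{^{-1}}\|_2}{|\det \tensor[^t]{g}{^{-1}}|^{-1/n}}\right)
\]
exhibits $E^*(g,s)+1/s+1/(n-s)$ as a sum of nonnegative real quantities. This yields both claims: $E^*(g,s)$ is real, and it is bounded below by $-1/s-1/(n-s)$.

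There is no real obstacle; the only minor point worth verifying carefully is that the two series on the right-hand side converge absolutely for all real $s\neq 0,n$ (not just in the region of absolute convergence of the defining Dirichlet series), but this is clear from the Gaussian decay of $f(s,\cdot)$ built into the incomplete gamma integral. Hence the corollary follows directly from Theorem \ref{thm:approx-func-eq}.
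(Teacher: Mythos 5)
Your proof is correct and is exactly the argument the paper intends: the corollary is stated without proof as an immediate consequence of Theorem \ref{thm:approx-func-eq}, since for real $s$ each term $f(s,a)=\int_1^\infty t^{s/2}e^{-\pi t a^2}\dif^\times t$ is a positive real number and the two lattice sums converge by the Gaussian decay. Your extra remark on absolute convergence is a reasonable piece of due diligence but does not change the substance.
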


\begin{defi}
For any $g\in\mathbf{GL}_n(\mathbb{R})$ set
\begin{equation*}
\lambda_1(g)\coloneqq |\det g|^{-1/n} \min_{0\neq \mathrm{v} \in \mathbb{Z}^n} \|\mathrm{v} g\|_2
\end{equation*}
to be the length of the shortest non-trivial vector in the unimodular lattice homothetic to $\mathbb{Z}^n g$. 
\end{defi}
Recall that Mahler's compactness criterion implies that $\lambda_1\colon \lfaktor{\mathbf{PGL}_n(\mathbb{Z})}{\mathbf{PGL}_n(\mathbb{R})}\to \mathbb{R}_{>0}$ is a proper continuous function.

\begin{thm}\label{thm:Epstein-cusp}
Denote by $V_{n-1}=\nicefrac{\pi^{(n-1)/2}}{\Gamma\left((n+1)/2\right)}$ the volume of the $(n-1)$-dimensional unit ball.
For any real $0<s<n$ if 
\begin{equation*}
\lambda_1(g)\leq \begin{cases}
2^{-1/(s-1)} & s>1\\
1 & s =1\\
V_{n-1} 2^{-\frac{(n-s)(n-1)}{n-s-1}} & s<1
\end{cases}
\end{equation*}
then 
\begin{equation*}
E^*(g,s)+\frac{1}{s}+\frac{1}{n-s}\gg
\begin{cases}
\lambda_1(g)^{-s} & s>1\\
-\lambda_1(g)^{-1}\log \lambda_1(g) & s=1\\
\left(\lambda_1(g) \frac{2^{n-1}}{V_{n-1}}\right)^{-(n-s)/(n-1)} & s<1
\end{cases}
\end{equation*}
The implied constant above is independent of all parameters.
\end{thm}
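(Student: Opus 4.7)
The plan is to start from the approximate functional equation in Theorem \ref{thm:approx-func-eq}, which writes
\[
E^*(g,s)+\frac{1}{s}+\frac{1}{n-s}=\tfrac{1}{2}(\Sigma_1+\Sigma_2),
\]
where $\Sigma_1$ sums $f(s,a_v)$ with $a_v=\|vg\|_2/|\det g|^{1/n}$ over $0\neq v\in\mathbb{Z}^n$, and $\Sigma_2$ is the analogous sum with parameter $n-s$ over the dual lattice $\mathbb{Z}^n\tensor[^t]{g}{^{-1}}$, using the rescaled length $\|v\tensor[^t]{g}{^{-1}}\|_2/|\det \tensor[^t]{g}{^{-1}}|^{-1/n}$. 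Since $f(\rho,a)>0$ for real $\rho,a>0$, each sum is nonnegative term by term, so it suffices to lower-bound $\Sigma_1$ or $\Sigma_2$ by a single well-chosen subsum. The basic analytic input is the elementary bound
\[
f(\rho,a)=(\pi a^2)^{-\rho/2}\Gamma\!\bigl(\tfrac{\rho}{2},\pi a^2\bigr)\geq \Gamma\!\bigl(\tfrac{\rho}{2},1\bigr)(\pi a^2)^{-\rho/2}\gg a^{-\rho}\qquad \text{for}\ a\leq 1/\sqrt{\pi},
\]
with an implied constant depending only on $n$ (since $\Gamma(\rho/2,1)$ is bounded below on $\rho\in[0,n]$).

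For $s\geq 1$ I would bound $\Sigma_1$ directly from the short vector. Let $v_0\in\mathbb{Z}^n$ be primitive with rescaled length $\lambda_1=\lambda_1(g)$. All integer multiples $kv_0$ contribute to $\Sigma_1$ at rescaled distance $k\lambda_1$, giving
\[
\Sigma_1\gg \lambda_1^{-s}\sum_{k=1}^{\lfloor c/\lambda_1\rfloor} k^{-s}
\]
provided $c$ is chosen so that $k\lambda_1\leq 1/\sqrt{\pi}$ throughout. For $s>1$ the partial sum is $\asymp 1$ and we get $\lambda_1^{-s}$; for $s=1$ it is $\asymp\log(1/\lambda_1)$, yielding the advertised logarithmic improvement. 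The numerical hypotheses $\lambda_1\leq 2^{-1/(s-1)}$ and $\lambda_1\leq 1$ are precisely what is needed to place $\lambda_1$ in the regime where this elementary estimate is in force.

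The interesting case is $s<1$, in which the same strategy applied to $\Sigma_1$ yields only $\lambda_1^{-1}$, strictly weaker than the claimed exponent $(n-s)/(n-1)>1$, so one must use $\Sigma_2$. The key geometric input is a hyperplane-duality trick. After the unimodular rescaling, let $L=\mathbb{Z}^n g$ and $L^*=\mathbb{Z}^n\tensor[^t]{g}{^{-1}}$. Taking $v_0g$ the shortest vector of $L$ and $H=(v_0g)^\perp$, one has $L^*\cap H=(\pi_H L)^*$ inside $H\cong\mathbb{R}^{n-1}$; since $\pi_H L$ has covolume $1/\lambda_1$ in $H$, the sublattice $L^*\cap H$ has covolume $\lambda_1$. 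Minkowski's first theorem in $\mathbb{R}^{n-1}$ then produces a nonzero $w\in L^*\cap H$ with
\[
\|w\|_2 \leq 2\,(\lambda_1/V_{n-1})^{1/(n-1)},
\]
and the single contribution of $w$ to $\Sigma_2$ gives
\[
\Sigma_2 \gg \|w\|_2^{-(n-s)} \gg \bigl(\lambda_1\cdot 2^{n-1}/V_{n-1}\bigr)^{-(n-s)/(n-1)},
\]
matching the claim. The explicit hypothesis $\lambda_1\leq V_{n-1}2^{-(n-s)(n-1)/(n-s-1)}$ is engineered to guarantee $\|w\|_2\leq 1/\sqrt{\pi}$ so that the lower bound on $f$ still applies.

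The main conceptual obstacle is recognizing that different regimes need different sources: for $s\geq 1$ the short vector of $L$ feeds $\Sigma_1$ directly, but for $s<1$ it must be exploited \emph{indirectly}, forcing the appearance of the $(n-1)$-st root and the factor $V_{n-1}$ through Minkowski in the orthogonal hyperplane. The remaining work is bookkeeping of constants to verify that the three explicit hypotheses on $\lambda_1$ place the rescaled lengths (either $k\lambda_1$ or $\|w\|_2$) in the range where $f(\rho,a)\gg a^{-\rho}$ is valid.
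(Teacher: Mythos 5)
Your argument is essentially the paper's: positivity of the approximate functional equation, restriction to the line through a shortest vector when $s\geq 1$, and for $s<1$ the shift to the dual lattice intersected with the orthogonal hyperplane plus Minkowski's first theorem (your projection identity $L^*\cap H=(\pi_H L)^*$ with covolume $\lambda_1$ is exactly the computation the paper carries out with a dual basis; your term-by-term lower bound on $f$ replaces the paper's sum-to-integral comparison via $\operatorname{erfc}$, a cosmetic difference). The one loose point is your cutoff $a\leq 1/\sqrt{\pi}$ for the estimate $f(\rho,a)\gg a^{-\rho}$: the stated hypotheses only give $\lambda_1(g)\leq 2^{-1/(s-1)}$ (resp.\ $\|w\|_2\leq 2^{-1/(n-s-1)}$ after Minkowski), which exceeds $1/\sqrt{\pi}$ when $s$ (resp.\ $n-s$) is large --- but this is harmless, since $f(\rho,a)=(\pi a^2)^{-\rho/2}\Gamma(\rho/2,\pi a^2)\geq \pi^{-\rho/2}\Gamma(\rho/2,\pi)\,a^{-\rho}=\left(\int_1^\infty u^{\rho/2-1}e^{-\pi u}\dif u\right)a^{-\rho}\geq \left(\int_1^\infty u^{-1}e^{-\pi u}\dif u\right)a^{-\rho}$ already holds for all $a\leq 1$ and $\rho>0$, so the required bound is in force in every case with an absolute constant.
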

\begin{remark}
The lower bound above is optimal up to a constant. This can be seen by applying iteratively the Fourier expansion of $E^*(g,s)$ due to Terras \cite{TerrasFourier} to compute the constant term of $E^*(g,s)$. Write the Iwasawa decomposition of $g=u\cdot a \cdot k$ where $a=\diag(y_1,\ldots,y_n)$ with positive entries, $u\in \mathbf{N}(\mathbb R)$ is lower triangular unipotent and $k\in\mathbf{O}_n(\mathbb R)$.
The constant term of $E^*(g,s)$ is equal to\footnote{The original expansion in \cite{TerrasFourier} is in terms of the Iwasawa decomposition of $\tensor[^t]{g}{^{-1}}$. To pass to an expression in terms of the decomposition of $g$ we apply first the functional equation.}
\begin{equation*}
\sum_{k=0}^{n-1} \zeta^*(s-k) \prod_{i=1}^{n-k-1} \left(\frac{y_{i+1}}{y_i}\right)^{i (1-s/n)} \cdot 
\prod_{i=n-k}^{n-1} \left(\frac{y_{i+1}}{y_i}\right)^{(n-i) s/n}\;.
\end{equation*}
While $\lambda_1(g) \asymp_n \prod_{i=1}^{n-1} \left(\frac{y_{i+1}}{y_i}\right)^{-(n-i)/n}$. Combining these two expressions we deduce that at least the constant term is asymptotic to the lower bound in the theorem above. The difficulty in establishing the theorem using the Fourier expansion is that it is hard to analyze for $n>2$ the contribution of the non-constant terms in the Fourier expansion when $\diag(y_1,\ldots,y_n)$ is near the walls of the positive Weyl chamber. Instead we study the behavior in the critical strip using an approximate functional equation. 
\end{remark}
\begin{proof}
Assume first $s\geq 1$ then $\lambda_1(g)\leq 1$ by assumption.
Because the sums  over the lattices $\mathbb{Z}^n g$ and $\mathbb{Z}^n \tensor[^t]{g}{^{-1}}$ in Theorem \ref{thm:approx-func-eq} are positive, we can compute a lower bound by restricting the sum to a line going through a vector of minimal length in $\mathbb{Z}^n g$. This implies   
\begin{equation*}
E^*(g,s)+\frac{1}{s}+\frac{1}{n-s}\geq \frac{1}{2} \sum_{0\neq b\in\mathbb{Z}} f(s,|b|\lambda_1(g))\;.
\end{equation*}
The integral representation of $f(s,a)$ implies that it is a monotonic decreasing function of $a$ for $a>0$. Hence the right hand side above can be bounded below by
\begin{equation*}
\frac{1}{\lambda_1(g)}\int_{\lambda_1(g)}^{\infty} f(s,a)\dif a=\frac{\lambda_1(g)^{-s}}{2}
\int_{\lambda_1(g)^2}^{\infty} t^{(s-1)/2}\operatorname{erfc}(\sqrt{\pi t})\dif^\times t
\geq \frac{\lambda_1(g)^{-s}}{2} \int_{\lambda_1(g)^2}^{1} t^{(s-1)/2}\operatorname{erfc}(\sqrt{\pi t})\dif^\times t \;.
\end{equation*}
The equality above follows by applying Fubini to the integral representation of $f(s,a)$ and the change of variables $t\mapsto \lambda_1(g)^2t$.
We bound the latter integral using the monotonicity inequality $\operatorname{erfc}(x)\geq\operatorname{erfc}(\sqrt{\pi})$ for $0\leq x\leq \sqrt{\pi}$: 
\begin{equation}\label{eq:lambda_1_int_small}
\frac{\lambda_1(g)^{-s}}{2}
\int_{\lambda_1(g)^2}^{1} t^{(s-1)/2}\operatorname{erfc}(\sqrt{\pi t})\dif^\times t
\gg \frac{\lambda_1(g)^{-s}}{2} \int_{\lambda_1(g)^2}^{1} t^{(s-1)/2}\dif^\times t=
\begin{cases}
\frac{\lambda_1(g)^{-s}-\lambda_1(g)^{-1}}{s-1} & s\neq 1\\
-\lambda_1(g)^{-1}\log \lambda_1(g) & s=1
\end{cases}
\end{equation}

This establishes the claim in case $s=1$. In case $s>1$  the assumption $\lambda_1(g)^{s-1}<1/2$ implies that $\lambda_1(g)^{-1}\leq1/2 \lambda_1(g)^{-s}$ and the claim follows again from \eqref{eq:lambda_1_int_small}.

The lower bound for $s<1$ will follow from applying the $s>1$ case to the dual lattice which also contributes to $E^*(g,s)$ with $s$ replaced by $n-s$. We need only to establish
\begin{equation}\label{eq:lambda1-dual-ineq}
\lambda_1\left(\tensor[^t]{g}{^{-1}}\right)^{n-1}\leq \frac{2^{n-1}}{V_{n-1}} \lambda_1(g) \;.
\end{equation}
To prove inequality \eqref{eq:lambda1-dual-ineq} fix  $v_1,\ldots,v_n$ a basis of the lattice $\Lambda\coloneqq\mathbb{Z}^n g$, where $v_1$ is a vector of minimal length. Denote by $\mathrm{v}_1^*,\ldots,\mathrm{v}_n^*$ the dual basis of $\Lambda^*\coloneqq\mathbb{Z}^n \tensor[^t]{g}{^{-1}}$. Then $\mathrm{v}_2^*,\ldots,\mathrm{v}_n^*$ span a lattice $\Lambda_1^*$ in the $n-1$-dimensional hyperplane $\mathrm{v}_1^\perp$ and
\begin{equation*}
|\det g|^{-1}=\operatorname{covol}\left(\Lambda^*\right)=\operatorname{covol}\left(\mathrm{v}_1^*\mathbb{Z}+\Lambda_1^*\right)
=\left|\left\langle \mathrm{v}_1^*, \frac{\mathrm{v}_1}{\|\mathrm{v}_1\|}\right\rangle\right| \operatorname{covol}\left(\Lambda_1^*\right)= \|\mathrm{v}_1\|^{-1} \operatorname{covol}\left(\Lambda_1^*\right)\;.
\end{equation*}
Hence $\operatorname{covol}\left(\Lambda_1^*\right)= \lambda_1(g) |\det g|^{1/n-1}$ and Minkowski's first theorem implies that there is a vector $\mathrm{v}^*\in \Lambda_1^*\subset \Lambda^*$ satisfying $V_{n-1}\|\mathrm{v}_*\|_2^{n-1}\leq 2^{n-1}  \lambda_1(g) |\det g|^{1/n-1}$. This implies \eqref{eq:lambda1-dual-ineq} and the second claimed inequality.
\end{proof}

\begin{cor}\label{cor:Epstein-cusp-uniform}
Assume $1/n\leq s<1$. There are effectively computable constants $A_0,B_0>0$, depending only on $n$ and $s$, such that for all $g\in \mathbf{GL}_n(\mathbb{R})$
\begin{equation*}
E^*(g,ns)\geq A_0 \lambda_1(g)^{-ns}-B_0\;.
\end{equation*}
In fact,
\begin{align*}
A_0&=\operatorname{erfc}(\sqrt{\pi})
\begin{cases}
\frac{1}{2(ns-1)} & s>1/n\\
\log 2 &s=1/n
\end{cases},
&
B_0&=\frac{1}{n}\left(\frac{1}{s}+\frac{1}{1-s}\right)+A_0
\begin{cases}
2^{ns/(ns-1)} & s>1/n\\
2 & s=1/n
\end{cases}
\end{align*}
are admissible.
\end{cor}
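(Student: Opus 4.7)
The plan is to specialize Theorem \ref{thm:Epstein-cusp} at parameter $\rho = ns \in [1,n)$, keep careful track of the implicit constants in its proof, and then extend the resulting inequality from the near-cusp regime to all $g$ by means of the unconditional pointwise lower bound in Corollary \ref{cor:Epstein-pos}. The case $s>1/n$ is covered by the $\rho>1$ branch of Theorem \ref{thm:Epstein-cusp}, while the boundary case $s=1/n$ invokes the $\rho=1$ branch.

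First I would re-examine the proof of Theorem \ref{thm:Epstein-cusp} with parameter $ns$ in place of $s$. For $s>1/n$, the one pointwise estimate $\operatorname{erfc}(\sqrt{\pi t})\geq \operatorname{erfc}(\sqrt{\pi})$ on $[0,1]$ that is used in that proof produces the quantitative inequality
\[
E^*(g,ns)+\frac{1}{ns}+\frac{1}{n-ns}\;\geq\; \operatorname{erfc}(\sqrt{\pi})\cdot\frac{\lambda_1(g)^{-ns}-\lambda_1(g)^{-1}}{ns-1}
\]
whenever $\lambda_1(g)\leq 2^{-1/(ns-1)}$. The threshold condition is precisely $\lambda_1(g)^{-1}\leq \tfrac{1}{2}\lambda_1(g)^{-ns}$, so the numerator on the right is at least $\tfrac{1}{2}\lambda_1(g)^{-ns}$, giving $E^*(g,ns)\geq A_0\lambda_1(g)^{-ns}-\tfrac{1}{n}\bigl(\tfrac{1}{s}+\tfrac{1}{1-s}\bigr)$ with $A_0=\operatorname{erfc}(\sqrt{\pi})/(2(ns-1))$ as advertised.

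Next I would extend the inequality to the complementary range $\lambda_1(g)>2^{-1/(ns-1)}$. In this regime the factor $\lambda_1(g)^{-ns}$ is bounded by $2^{ns/(ns-1)}$, so Corollary \ref{cor:Epstein-pos} alone yields $E^*(g,ns)\geq -\tfrac{1}{n}\bigl(\tfrac{1}{s}+\tfrac{1}{1-s}\bigr)\geq A_0\lambda_1(g)^{-ns}-B_0$ as soon as we inflate the constant to $B_0=\tfrac{1}{n}\bigl(\tfrac{1}{s}+\tfrac{1}{1-s}\bigr)+A_0\cdot 2^{ns/(ns-1)}$. This value of $B_0$ also dominates the error in the near-cusp regime, so it works uniformly in $g$ and matches the stated formula.

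The boundary case $s=1/n$ is handled in parallel using the $\rho=1$ conclusion $E^*(g,1)+1+\tfrac{1}{n-1}\gg -\lambda_1(g)^{-1}\log\lambda_1(g)$ of Theorem \ref{thm:Epstein-cusp}. For $\lambda_1(g)\leq 1/2$ the inequality $-\log\lambda_1(g)\geq \log 2$ converts the logarithmic bound into a linear $\lambda_1(g)^{-1}$ bound with $A_0=\operatorname{erfc}(\sqrt{\pi})\log 2$; for $\lambda_1(g)>1/2$ Corollary \ref{cor:Epstein-pos} suffices since $A_0\lambda_1(g)^{-1}<2A_0$, which yields $B_0=\tfrac{n}{n-1}+2A_0$. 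The argument presents no real analytic obstacle; the only delicate points are bookkeeping the constant $\operatorname{erfc}(\sqrt{\pi})$ extracted from the comparison step in the proof of Theorem \ref{thm:Epstein-cusp} and verifying that a single $B_0$ covers both the cuspidal region (where the quantitative lower bound is active) and its complement (where only Corollary \ref{cor:Epstein-pos} is available).
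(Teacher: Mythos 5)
Your proposal is correct and follows the paper's own argument essentially verbatim: apply Theorem \ref{thm:Epstein-cusp} at parameter $ns$ in the regime $\lambda_1(g)\leq 2^{-1/(ns-1)}$ (resp.\ $\lambda_1(g)\leq 1/2$ when $s=1/n$), extracting the constant $\operatorname{erfc}(\sqrt{\pi})$ from the comparison step in its proof, and handle the complementary regime with Corollary \ref{cor:Epstein-pos} after bounding $\lambda_1(g)^{-ns}$ by the appropriate power of $2$. The constant bookkeeping matches the stated values of $A_0$ and $B_0$ in both cases.
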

\begin{proof}
This follows immediately with $B_0=\frac{1}{n}\left(\frac{1}{s}+\frac{1}{1-s}\right)$ from Theorem \ref{thm:Epstein-cusp} above if $\lambda_1(g)< 2^{-1/(ns-1)}$ and $s>1/n$ or if $s=1/n$ and $\lambda_1(g)<1/2$. The specific value of $A_0$ is a direct consequence of the proof.

Otherwise, assume first that $s>1/n$. If $\lambda_1(g)\geq 2^{-1/(ns-1)}$ then $\lambda_1(g)^{-ns}\leq 2^{ns/(ns-1)}$. Moreover, 
we know from Corollary \ref{cor:Epstein-pos} that $E^*(g,ns)\geq - \frac{1}{n}\left(\frac{1}{s}+\frac{1}{1-s}\right)$. Hence the claim holds for any $g$ with $B_0=\frac{1}{n}\left(\frac{1}{s}+\frac{1}{1-s}\right)+2^{ns/(ns-1)}A_0$. The argument for $s=1/n$ is analogous.
\end{proof}

\section{Toral periods of the Epstein zeta function}\label{sec:periods}
We recall a formula originally due to Hecke that relates Hecke $L$-functions of number field to toral periods of the Epstein zeta function. The proofs are straightforward using the unfolding method and can be extended to any Grossencharakter $L$-function in the ad\`elic setting, cf. \cite[Lemma 10.4]{ELMVCubic} and \cite{Wielonsky}.

Let $E/\mathbb{Q}$ be a degree $n$ number field with $r_1$ real places and $r_2$ inequivalent complex places. Denote by $D\coloneqq\disc(\mathcal{O}_E)$ the discriminant of its ring integers and let $R\coloneqq \Reg(\mathcal{O}_E)$ be its regulator. Set $h\coloneqq\#\Cl(E)$.
Let $E_\infty$ be the \'etale-algebra  $E\otimes \mathbb{R}$  over $\mathbb{R}$.

Fix once and for all a ring isomorphism
\begin{equation*}
\iota\colon E_\infty\to \mathbb{R}^{r_1}\times \mathbb{C}^{r_2}
\end{equation*}
This map is unique up to post-composition with permutation of the real and complex places respectively and complex conjugation at each complex places. We henceforth identify the right hand side with $\mathbb{R}^n=\mathbb{R}^{r_1+2r_2}$ in the standard manner. For any $\mathbb{Z}$-lattice $\Lambda \subset E$ we denote by $\tensor[^\iota]{\Lambda}{}$ the element of $\lfaktor{\mathbf{PGL}_n(\mathbb{Z})}{\mathbf{PGL}_n(\mathbb{Z})}$ corresponding to the lattice $\iota(\Lambda)$. Specifically, the raw matrix of every $\mathbb{Z}$-basis of the lattice $\iota(\Lambda)$ is a representative of the coset $\tensor[^\iota]{\Lambda}{}$.

\begin{defi} We denote by $\left(\mathbb{R}^\times\right)^\Delta$ the diagonal embedding of $\mathbb{R}^\times$ in $E_\infty^\times$.
Set
\begin{equation*}
H\coloneqq \lfaktor{\left(\mathbb{R}^\times\right)^\Delta}{E_\infty^\times}=\lfaktor{\left(\mathbb{R}^\times\right)^\Delta}{\left(\mathbb{R}^\times\right)^{r_1}\times {\left(\mathbb{C}^\times\right)^{r_2}}}\,.
\end{equation*}
We identify $H$ with a maximal torus subgroup in $\mathbf{PGL}_n(\mathbb{R})$ using the map $\iota$.
The Haar measure on $H$ is normalized to be consistent with the standard Haar measures on $E_\infty^\times$ and $\mathbb{R}^\times$.
\end{defi}

Dirichlet's unit theorem implies that $\mathcal{O}_E^\times\slash \mathbb{Z}^\times$ is a lattice in $H$ of covolume 
\begin{equation*}
\frac{2^{r_1} \pi^{r_2} n R}{w}\;,
\end{equation*}
where $w$ is the number of roots of unity in $E$. 

\begin{defi}
Define $[H]=\lfaktor{\mathcal{O}_E^\times}{H}$ and normalize the Haar measure on $[H]$ so it has volume $1$. If $\dif ^\times h$ is the Haar measure on $H$, then the measure on $[H]$ descents from the Haar measure
\begin{equation*}
\frac{w\dif^\times h}{2^{r_1}\pi^{r_2}n R} \;.
\end{equation*}

If $\Lambda\subset E$ is a fractional $\mathcal{O}_E$-ideal then the stabilizer in $H$ of $\tensor[^\iota]{\Lambda}{}$ is the lattice $\mathcal{O}_E^\times\slash \mathbb{Z}^\times$. Hence $\tensor[^\iota]{\Lambda}{}H$ is a periodic $H$-orbit isomorphic to $[H]$. This orbit depends only on the ideal class of $\Lambda$. The ideal classes of $\mathcal{O}_E$ give rise to a finite collection of periodic $H$-orbits, c.f.\ \cite{ELMVPeriodic}. This collection is called a \emph{packet} of periodic $H$-orbits. 
\end{defi}

\begin{defi}\label{defi:partial-dedekind}
Let $\Lambda\subset E$ be a fractional $\mathcal{O}_E$-ideal. The partial Dedekind zeta-function of $\Lambda$ is defined by the Dirichlet series
\begin{equation*}
\zeta_{\Lambda}(s)\coloneqq\Nr(\Lambda)^s \sum_{0\neq \mathrm{v}\in \Lambda} \left|\Nr \mathrm{v} \right|^{-s}
\end{equation*}
that converges for $\Re s>1$.
The zeta function depends only the class of $\Lambda$ modulo the principal ideals. For every class group character $\chi\colon \Cl(E)\to\mathbb{C}^\times$ the class group $L$-function satisfies
\begin{equation*}
L(s,\chi)=\sum_{[\Lambda]\in \Cl(E)} \zeta_{\Lambda}(s) \bar{\chi}(\Lambda)\;. 
\end{equation*} 

We also define the completed partial zeta function as
\begin{equation*}
\zeta_{\Lambda}^*(s)\coloneqq \left(\pi^{-s/2}\Gamma\left(\frac{s}{2}\right)\right)^{r_1} \big((2\pi)^{-s}\Gamma\left(s\right)\big)^{r_2} |D|^{s/2} \zeta_{\Lambda}(s)\;.
\end{equation*}
The completed $L$-function $L^*(s,\chi)$ of a class group character $\chi$ is defined similarly. These satisfy a functional equation due to Hecke. The functional equation for these $L$-functions is a direct consequence of the functional equation for the completed Epstein zeta function and Theorem \ref{thm:Hecke-period} below. 
\end{defi}

\begin{thm}\label{thm:Hecke-period}[Hecke]
Let $\Lambda\subset E$ be a fractional $\mathcal{O}_E$-ideal. Then $\tensor[^\iota]{\Lambda}{}$ is a periodic $H$-orbit and for any $s\neq 0,1$ and
\begin{equation*}
\int_{[H]} E^*(\tensor[^\iota]{\Lambda}{}h,ns) \dif^\times h 
=\frac{w}{2^{r_1} n R} \zeta^*_{\Lambda}(s)\;,
\end{equation*}
\end{thm}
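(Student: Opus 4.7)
The plan is Hecke's unfolding method, in the style of Tate's thesis; I work in the region $\Re s > 1$ where everything converges absolutely, and extend to general $s\ne 0,1$ by meromorphic continuation. Unwinding the definitions,
\begin{equation*}
E^*(\tensor[^\iota]{\Lambda}{}h, ns)=\tfrac12\pi^{-ns/2}\Gamma(ns/2)\cdot(\mathrm{covol}\,\iota(\Lambda))^s |\det h|^s\sum_{0\ne x\in\iota(\Lambda)}\|xh\|_2^{-ns},
\end{equation*}
where $\mathrm{covol}\,\iota(\Lambda)=2^{-r_2}|D|^{1/2}\Nr(\Lambda)$. The integrand is $\mathcal O_E^\times$-invariant in $h$ (units permute $\iota(\Lambda)\setminus\{0\}$ and satisfy $|\det\iota(u)|=|\Nr u|=1$), so it descends to $[H]$.

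Next, unfold the lattice sum. The $\mathcal O_E^\times$-action on $\iota(\Lambda)\setminus\{0\}$ is free, while on $H$ it factors through $\mathcal O_E^\times/\mathbb Z^\times$ (a lattice of covolume $\tfrac{2^{r_1}\pi^{r_2}nR}{w}$ by Dirichlet), contributing an extra factor of $2$ from the kernel $\mathbb Z^\times\subset\mathcal O_E^\times$. Grouping the lattice sum over $\mathcal O_E^\times$-orbit representatives $[r]$ and then substituting $h\mapsto\iota(r)^{-1}h$ in each resulting integral (using $|\det\iota(r)|=|\Nr r|$) yields
\begin{equation*}
\int_{[H]} E^*(\tensor[^\iota]{\Lambda}{}h, ns)\,\dif^\times h = C\cdot\Nr(\Lambda)^{-s}\zeta_\Lambda(s)\cdot I(s),\qquad I(s)\coloneqq\int_H |\det h|^s\|h\|_2^{-ns}\,\dif^\times h,
\end{equation*}
where $C$ collects the normalisation constants and $\sum_{[r]}|\Nr r|^{-s}=\Nr(\Lambda)^{-s}\zeta_\Lambda(s)$ by Definition~\ref{defi:partial-dedekind}.

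It remains to evaluate $I(s)$. Insert the Mellin identity $\pi^{-ns/2}\Gamma(ns/2)\|h\|_2^{-ns}=\int_0^\infty t^{ns/2}e^{-\pi t\|h\|_2^2}\,\dif^\times t$, swap orders, and apply the change of variable $h\mapsto t^{-1/2}h$ to decouple the $t$-integral from the $H$-integral; the diagonal quotient $(\mathbb R^\times)^\Delta\backslash E_\infty^\times$ absorbs one copy of the $t$-direction. The remaining Gaussian then factorises across the places of $E_\infty$: each real place contributes the Tate factor $\int_{\mathbb R^\times}e^{-\pi y^2}|y|^s\,\dif^\times y=\pi^{-s/2}\Gamma(s/2)$, and each complex place contributes $\int_{\mathbb C^\times}e^{-2\pi|z|^2}(2|z|^2)^s\,\dif^\times z=(2\pi)^{-s}\Gamma(s)$. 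Combining these with $(\mathrm{covol}\,\iota(\Lambda))^s=(2^{-r_2}|D|^{1/2}\Nr(\Lambda))^s$ reproduces the gamma factors and the discriminant power in $\zeta_\Lambda^*(s)$, together with the claimed prefactor $\tfrac{w}{2^{r_1}nR}$.

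The chief obstacle is bookkeeping. Because $(\mathbb R^\times)^\Delta$ is diagonally embedded and not a product of local subgroups, $I(s)$ does not factorise directly over places; the Mellin detour above (or an equivalent polar-coordinate decomposition) is needed to convert it into local integrals. Once that factorisation is in place, the $\pi^{r_2}$ from the covolume of the unit lattice, the $2^{-r_2}$ from $\mathrm{covol}\,\iota(\Lambda)$, the factor of $2$ from the kernel $\mathbb Z^\times\subset\mathcal O_E^\times$, and the $\tfrac12$ from the definition of $E$ must all be tracked carefully to recover the clean prefactor $\tfrac{w}{2^{r_1}nR}$.
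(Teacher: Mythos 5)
Your proposal is correct and follows essentially the same route as the paper: unfold the lattice sum over $\mathcal O_E^\times$-orbits (with the factor of $2$ from $\mathbb Z^\times$ cancelling the $\tfrac12$ in the definition of $E$), then evaluate the remaining archimedean integral $I(s)$ by a Gaussian/Mellin identity that lets it factorize over the places into the Tate local integrals --- this is precisely the content of the paper's Lemma~\ref{lem:Hecke-trick}, which computes $\int_{E_\infty^\times}e^{-\|y\|_2^2}|\Nr y|^s\,\dif^\times y$ once by fibering over $H$ and once as a product over places. The only differences are cosmetic normalizations (your Gaussian carries a $\pi$), and your bookkeeping of the constants is consistent with the paper's.
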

We reproduce the proof for completeness sake using the following important lemma, also due to Hecke. The crux of the proof is that the ring of $E_\infty^\times$-invariant polynomials on $E_\infty$ is generated by the norm function.
\begin{lem}\label{lem:Hecke-trick} [Hecke's Trick]
Equip $E_\infty$ with an Euclidean inner-product by summing the standard inner-products on each copy of $\mathbb{R}$ and $\mathbb{C}$. Then for all $\mathrm{v}\in E_\infty^\times$ 
\begin{equation*}
\int_{H} \left\|\mathrm{v}h\right\|_2^{-ns} \left|\Nr h\right|^{s}\dif^\times h=\frac{\pi^{r_2}\Gamma\left(s/2\right)^{r_1}\Gamma\left(s\right)^{r_2}}{\Gamma\left(ns/2\right)} |\Nr \mathrm{v}|^{-s}\;.
\end{equation*}

\begin{proof}
Using the change of variables $h\mapsto \mathrm{v}h$ we see that
\begin{equation*}
\int_{H} \left\|\mathrm{v}h\right\|_2^{-ns} \left|\Nr h\right|^s \dif^\times h=|\Nr \mathrm{v}|^{-s}
\int_{H} \ \left\|  h\right\|_2^{-ns} \left|\Nr h\right|^s \dif^\times h=|\Nr \mathrm{v}|^{-s}
I(s)\;.
\end{equation*}
To evaluate the integral on the right-hand side, $I(s)$, we calculate the integral 
$\int_{E_\infty^\times}e^{-\|y\|_2^2} \left|\Nr y\right|^s  \dif^\times y$
in two different ways. On one hand we use the compatibility of Haar measures on quotients and the change of variables $t\mapsto t \|y\|_2$
\begin{align*}
\int_{E_\infty^\times}e^{-\|y\|_2^2} \left|\Nr y\right|^{s} \dif^\times y&=\int_{H={\left(\mathbb{R}^\times\right)^\Delta}\backslash{E_\infty^\times}} \int_{\mathbb{R}^\times} e^{-(|t|\|y\|_2)^2} |t|^{ns} \left|\Nr y\right|^{s}\dif^\times t \dif^\times \left(\mathbb{R}^\times y\right)\\
&=\int_{H} \|y\|_2^{-ns} \left|\Nr y\right|^{s} \int_{\mathbb{R}^\times} e^{-(|t|\|y\|_2)^2} (|t|\|y\|_2)^{ns} \dif^\times t \dif^\times \left(\mathbb{R}^\times y\right)=I(s)\cdot \int_{0}^{\infty} e^{-t^2}t^{ns}\frac{2\dif t}{t}\\
&=I(s)\Gamma(ns/2)\;.
\end{align*}

On the other hand, using polar coordinates for each complex coordinate, the integral over $E_\infty^\times$ decomposes as a product
\begin{equation*}
\int_{E_\infty^\times}e^{-\|y\|_2^2} \left|\Nr y\right|^{s} \dif^\times y= \prod_{i=1}^{r_1} \int_{\mathbb{R}^\times}
e^{-t_i^2}|t_i|^s \dif^\times t_i \cdot \prod_{i=r_1+1}^{r_1+r_2} 2 \pi \int_{\mathbb{R}_{>0}}
e^{-r_i^2}r_i^{2s} \dif^\times r_i=\Gamma(s/2)^{r_1} \cdot \pi^{r_2}\Gamma(s)^{r_2}\;.
\end{equation*}
The proof concludes by comparing the two expressions for $\int_{E_\infty^\times}e^{-\|y\|_2^2} |y|^s  \dif^\times y$.
\end{proof}

\begin{proof}[Proof of Theorem \ref{thm:Hecke-period}]
We consider $h\in E_\infty^\times$ as an element of $\mathbf{GL}_n(\mathbb{R})$ using the map $\iota$, then $|\det h|=\left|\Nr h\right|$.
Rewrite the period of the Epstein zeta-function using the standard unwinding transformation
\begin{align*}
\int_{[H]} E(\tensor[^\iota]{\Lambda}{}h,ns)\dif^\times h = \frac{1}{2}&\operatorname{covol}(\Lambda)^s \sum_{0\neq \mathrm{v}\in \Lambda\slash \mathcal{O}_E^\times} \sum_{u\in \mathcal{O}_E^\times}
\int_{[H]} \|\mathrm{v}uh\|_2^{-sn} \left|\Nr h\right|^s\dif^\times h\\
=&\operatorname{covol}(\Lambda)^s \sum_{0\neq \mathrm{v}\in \Lambda\slash \mathcal{O}_E^\times} 
\int_H \|\mathrm{v}h\|_2^{-sn}\left|\Nr h\right|^s  \frac{w\dif^\times h}{2^{r_1}\pi^{r_2}n R} \;.
\end{align*}
The factor $1/2$ is absorbed in the difference between $\mathcal{O}_E^\times$ and the group $\mathcal{O}_E^\times \slash \mathbb{Z}^\times$.
The proof concludes by applying Lemma \ref{lem:Hecke-trick} and using the formula $\operatorname{covol}(\Lambda)=2^{-r_2}\sqrt{|D|} \Nr(\Lambda)$.
\end{proof}
\end{lem}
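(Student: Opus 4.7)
The plan is to prove Hecke's Trick in two steps: a translation that strips the $v$-dependence off the integral, followed by a single Gaussian integral computed two different ways.

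For the first step I would change variables $h\mapsto v^{-1}h$ on $H$. The integrand $\|vh\|_2^{-ns}|\Nr h|^s$ descends to the quotient $H=(\mathbb{R}^\times)^\Delta\backslash E_\infty^\times$ because $\|\cdot\|_2^n$ and $|\Nr(\cdot)|$ transform by the same scalar under the diagonal action of $\mathbb{R}^\times$, and left translation by $v\in E_\infty^\times$ preserves the Haar measure. Pulling out the factor $|\Nr v|^s$ from $|\Nr(v\cdot v^{-1}h)|^s$ and relabeling shows that the integral equals $|\Nr v|^{-s}\cdot I(s)$, where $I(s) \coloneqq \int_H \|h\|_2^{-ns}|\Nr h|^s \dif^\times h$ is a universal constant depending only on $s$. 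Absolute convergence holds for $\Re s$ sufficiently large, and the resulting identity extends by analytic continuation to $s\neq 0$.

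For the second step I would compute the auxiliary Gaussian probe $J(s)\coloneqq \int_{E_\infty^\times} e^{-\|y\|_2^2}|\Nr y|^s \dif^\times y$ in two complementary ways. Using the quotient map $E_\infty^\times\twoheadrightarrow H$ with compatible Haar measures and writing $y=th$ with $t\in\mathbb{R}^\times$, Fubini together with the substitution $u=|t|\|h\|_2$ inside the Gaussian reduces the $t$-integral to a standard Mellin–Gaussian evaluation $\int_0^\infty e^{-u^2}u^{ns}\cdot 2\dif u/u=\Gamma(ns/2)$, yielding $J(s)=I(s)\cdot\Gamma(ns/2)$. On the other hand, decomposing $E_\infty^\times$ as a product over archimedean places of $E$, the integral $J(s)$ factors place by place: each real place contributes $\int_{\mathbb{R}^\times} e^{-t^2}|t|^s\dif^\times t=\Gamma(s/2)$, and each complex place, in polar coordinates $z=re^{i\theta}$, contributes $2\pi\int_0^\infty e^{-r^2}r^{2s}\dif r/r=\pi\Gamma(s)$. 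Combining, $J(s)=\Gamma(s/2)^{r_1}\pi^{r_2}\Gamma(s)^{r_2}$. Equating the two expressions and solving for $I(s)$ produces exactly the gamma-factor in the statement, and multiplying by $|\Nr v|^{-s}$ completes the proof.

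The main obstacle I anticipate is the bookkeeping of Haar-measure normalizations: one must verify that the quotient measure on $H$ coming from the chosen Haar measures on $E_\infty^\times$ and $\mathbb{R}^\times$ matches the normalization stipulated in the lemma, so that the two evaluations of $J(s)$ can be equated with no stray factors. Everything else is routine: convergence for $\Re s$ large is straightforward, the one-dimensional Gaussian moments are classical, and the meromorphic continuation is automatic because $\Gamma(ns/2)$ has no zeros.
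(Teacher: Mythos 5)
Your proposal is correct and follows essentially the same route as the paper: a translation by $\mathrm{v}$ reduces the integral to $|\Nr \mathrm{v}|^{-s}I(s)$, and the constant $I(s)$ is extracted by evaluating the Gaussian integral $\int_{E_\infty^\times}e^{-\|y\|_2^2}|\Nr y|^s\dif^\times y$ both via the fibration over $H$ (giving $I(s)\Gamma(ns/2)$) and as a product of local integrals over the real and complex places (giving $\Gamma(s/2)^{r_1}\pi^{r_2}\Gamma(s)^{r_2}$). Your local computations, including the factor $\pi\Gamma(s)$ at each complex place, match the paper's exactly.
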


\section{The top period}\label{sec:highest}
We continue to fix a degree $n$ number field $E/\mathbb{Q}$ and carry all the notations from the previous sections. Henceforth we fix $1/2\leq s<1$ and define the function $Z\colon \Cl(E)\to\mathbb{C}$ as in the introduction
\begin{equation*}
Z(\Lambda)\coloneqq
\frac{w}{2^{r_1} n R} \zeta^*_{\Lambda}(s)=\int_{[H]} E^*(\tensor[^\iota]{\Lambda}{}h,ns) \dif^\times h\;.
\end{equation*}
Notice that the Fourier coefficients of the function $Z$ satisfy
\begin{equation*}
\hat{Z}(\chi)=\frac{1}{h}\sum_{[\Lambda]\in \Cl(E)} Z(\Lambda)\bar{\chi}(\Lambda)=
\frac{w}{2^{r_1} n h R} L^*(s,\chi)\;.
\end{equation*}

In this section we prove the following lower bound on the value of $Z$ at the identity class. This is a key part of our argument. 
\begin{prop}\label{prop:Z(O)-bound}
Let $1/2\leq s<1$. There are effectively computable constants $A_1,B_0>0$ depending only on $s$ and $n$ such that 
\begin{equation*}
Z(\mathcal{O}_E) \geq \frac{A_1|D|^{s/2}-B_0 R}{R}
\end{equation*}
In particular, $Z(\mathcal{O}_E)$ is positive if $|D|^{s/2}/R\gg_{s,n} 1$.
\end{prop}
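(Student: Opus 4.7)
Applying Corollary~\ref{cor:Epstein-cusp-uniform} pointwise to the integrand yields
\[
Z(\mathcal{O}_E) \;\geq\; A_0 \int_{[H]} \lambda_1\bigl(\tensor[^\iota]{\mathcal{O}_E}{}h\bigr)^{-ns}\,d\mu_{[H]}(h) \;-\; B_0,
\]
so it is enough to prove $\int_{[H]}\lambda_1^{-ns}\,d\mu_{[H]} \gg_{n,s}|D|^{s/2}/R$; the constants then rearrange into the claimed form.

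The crucial pointwise input is the presence of the short vector $\iota(1)=(1,\ldots,1)$ in $\iota(\mathcal{O}_E)$. For any representative $h\in H$ of a class in $[H]$, the lattice $\iota(\mathcal{O}_E)h$ contains the vector $\iota(h)=\iota(1)\cdot h$, whence
\[
\lambda_1\bigl(\tensor[^\iota]{\mathcal{O}_E}{}h\bigr) \;\leq\; \bigl(2^{-r_2}|D|^{1/2}|\Nr h|\bigr)^{-1/n}\|\iota(h)\|_2,
\]
and therefore $\lambda_1^{-ns}\gg_n |D|^{s/2}$ on the neighborhood
$U=\{[h]\in[H]:\min_{u\in\mathcal{O}_E^\times}\|\iota(uh)\|_2\leq 2\sqrt{r_1+r_2}\}$
of the identity class. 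The remaining task is the measure estimate $\mu_{[H]}(U)\gg_n 1/R$.

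In log-coordinates on $H$, the preimage of $U$ contains the polytope $\widetilde U=\{\ell\in L_0:\max_i\ell_i\leq \log 2\}$ around the origin, whose Lebesgue volume depends only on $n$. Because the probability measure on $[H]$ descends as $d\mu_{[H]}=\tfrac{w}{2^{r_1}\pi^{r_2}nR}\,d\mu_H$, the factor $1/R$ is already built into the normalization, and what remains is to show that the image of $\widetilde U$ in $[H]$ has Haar measure $\gg_n 1$ independently of the shape of the unit lattice $\Lambda_u\subset L_0$. This is where the \emph{successive minima trick} enters: pick units $\eta_1,\ldots,\eta_{d}$ (with $d=r_1+r_2-1$) whose logarithmic images realize the successive minima of $\Lambda_u$ in the Euclidean norm on $L_0$. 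They generate a sublattice $\Lambda'\subseteq\Lambda_u$ of bounded index $[\Lambda_u:\Lambda']\leq c_n$, and by Minkowski's second theorem $\operatorname{covol}(\Lambda')\leq c_n R$. Passing from $\Lambda_u$ to $\Lambda'$ costs only the bounded factor $c_n$, and a fundamental domain for $\Lambda'$ can be taken as the centered parallelepiped $P$ spanned by the $\ell(\eta_i)$, whose geometry is controlled purely by the successive minima. Within $P$ we isolate a suitably (re)scaled identity neighborhood $\widetilde U_\alpha$ on which $\lambda_1^{-ns}$ is still $\gg_n|D|^{s/2}$ and whose Haar measure, after accounting for the bounded multiplicity with which its $\Lambda_u$-translates cover it, is $\gg_n 1$.

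The main obstacle is this last step: obtaining a uniform lower bound on the fundamental-domain portion of the identity neighborhood, in the face of an arbitrarily elongated unit lattice. The successive minima trick resolves it by replacing $\Lambda_u$ with a sublattice whose basis is by construction optimally short, reducing the shape of the fundamental domain to a parallelepiped that can be estimated directly.
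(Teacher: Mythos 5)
Your reduction to the measure estimate is sound, and your route through it is genuinely different from the paper's: the paper does not localize to an identity neighborhood at all. Instead it integrates the pointwise bound $\lambda_1(\cdot)^{-ns}\gg_{n,s}|D|^{s/2}\|\exp_E(x)\|_2^{-ns}$ over an \emph{entire} fundamental parallelepiped of the sublattice $\Theta$ spanned by vectors realizing the successive minima, exploits the product structure of that parallelepiped to factor the integral into one-dimensional integrals $\int_0^1 e^{-ns\varepsilon\|\theta_j\|_\infty}\dif\varepsilon=\frac{1-e^{-ns\|\theta_j\|_\infty}}{ns\|\theta_j\|_\infty}$, and then bounds the product from below using Minkowski's second theorem for the denominators ($\prod_j\|\theta_j\|_\infty\ll_n R$) and a height lower bound for the numerators. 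Your localization idea could in principle give a simpler proof, but as written it has a genuine gap at exactly the step you leave vague.

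The gap is the final assertion that the $\Lambda_u$-translates of $\widetilde U$ cover its image in $[H]$ with ``bounded multiplicity''; everything rests on this. That multiplicity is controlled by $\#\bigl(\Lambda_u\cap(\widetilde U-\widetilde U)\bigr)$, and bounding this by a constant depending only on $n$, uniformly over all degree-$n$ fields, requires a uniform lower bound $\|\log_E(u)\|\gg_n 1$ for every non-torsion unit $u\in\mathcal{O}_E^\times$. This is not free: it is the Northcott property (effectively, a Lehmer-type bound such as Dobrowolski's, or Dimitrov's resolution of Schinzel--Zassenhaus), and it is precisely the input the paper invokes at the corresponding point to get $\prod_j\bigl(1-e^{-ns\|\theta_j\|_\infty}\bigr)\gg_n 1$. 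Without it the unit lattice could a priori contain an extremely short vector, the multiplicity would be unbounded, and the measure of the image of $\widetilde U$ could be far smaller than $\operatorname{vol}(\widetilde U)/R$. Note also that the successive-minima machinery you invoke does not supply this: Minkowski's second theorem gives an \emph{upper} bound on $\prod_i\lambda_i(\Lambda_u)$ in terms of $R$, which guards against the lattice being too sparse or elongated, whereas the obstruction to your multiplicity claim is the lattice being too \emph{dense} in some direction, i.e.\ a missing \emph{lower} bound on $\lambda_1(\Lambda_u)$. Once you add the height lower bound, your argument closes -- and the successive-minima detour and the rescaled neighborhood $\widetilde U_\alpha$ become unnecessary for your version of it.
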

We observe that the lower bound depends only on the regulator and not on the shape of the lattice of roots of unity. This is possible because the exponential map converts a linear combination of trace-less vectors in the logarithmic space to a product of units in $E_\infty$. Hence the average length in $E_\infty$, over a fundamental domain of units in the logarithmic space, almost decomposes as a product of averages. This reduces the proposition to a question of bounding a product of lengths of vectors forming a basis for the unit lattice. To control the latter we approximate the unit lattice by the sub-lattice spanned by vectors realizing the successive minima and use Minkowski's second theorem.
\begin{proof}
Consider as usual the logarithmic group homomorphism $\log_E\colon  E_\infty^\times \to \mathbb{R}^{r_1+r_2}$
\begin{equation*}
\log_E(u_1,\ldots,u_{r_1+r_2})\coloneqq\left(\log |u_1|_{\mathbb{R}},\ldots,\log|u_{r_1}|_{\mathbb{R}},
2\log|u_{r_1+1}|_{\mathbb{C}},\ldots,2\log|u_{r_1+r_2}|_{\mathbb{C}} \right)\;.
\end{equation*}
We will need the right-inverse 
\begin{equation*}
\exp_E(x_1,\ldots,x_{r_1+r_2})\coloneqq (\exp(x_1),\ldots,\exp(x_{r_1}),\exp(x_{r_1+1}/2),\ldots,\exp(x_{r_1+r_2}/2))\;.
\end{equation*}
The kernel of $\log_E$ is the group of elements whose coordinate-wise absolute values are all $1$. It is a compact subgroup that acts on $E_\infty$ by orthogonal transformation. In particular, $E^*(g,s)$ is invariant under right multiplication by $\ker (\log_E)$.
 
The map $\log_E$ furnishes a homomorphism from $H$ onto the trace $0$ subspace $\mathbb{R}^{r_1+r_2}_0$.  Dirichlet's units theorem states that the image of $\mathcal{O}_E^\times$ is a lattice in $\mathbb{R}^{r_1+r_2}_0$ of covolume $R$, where the covolume is computed with respect to the usual inner product on $\mathbb{R}^{r_1+r_2}$. Hence we can compute the integral of the spherical Epstein zeta function over the periodic $H$-orbit $\tensor[^\iota]{\mathcal{O}}{_E}H$ using a normalized Lebesgue measure on $\mathbb{R}^{r_1+r_2}_0$.
\begin{equation*}
\int_{[H]} E^*(\tensor[^\iota]{\mathcal{O}}{_E}h,ns)\dif^\times h= 
\frac{1}{R}\int_{\mathcal{F}} E^*(\tensor[^\iota]{\mathcal{O}}{_E}\exp_E(x_1,\ldots,x_{r_1+r_2-1}),ns) \dif^0(x_1,\ldots x_{r_1+r_2})\;,
\end{equation*}
where $\mathcal{F}$ is any fundamental domain for $\log_E(\mathcal{O}_E^\times)$ in $\mathbb{R}^{r_1+r_2}_0$ and $\dif^0(x_1,\ldots, x_{r_1+r_2})$ is the standard Lebesgue measure on the trace-less subspace $\mathbb{R}^{r_1+r_2}_0$. Corollary \ref{cor:Epstein-cusp-uniform} and the formula above imply that
\begin{equation}\label{eq:Z(OE)-cusp}
Z(\mathcal{O}_E) \geq  A_0 \frac{1}{R}
\int_{\mathcal{F}} 
\lambda_1(\tensor[^\iota]{\mathcal{O}}{_E}\exp_E(x))^{-ns} \dif^0 x-B_0
=A_0 I_\lambda(s)-B_0\;.
\end{equation}
Our aim now is to provide a proper lower bound for the normalized integral $I_\lambda(s)$.
Denote by $\|x\|_{\infty}=\max(|x_1|,\ldots,|x_{r_1}|,|x_{r_1+1}|/2,\ldots,|x_{r_1+r_2}|/2)$ the supremum norm on $\mathbb{R}^{r_1+r_2}$. This restricts to a norm on $\mathbb{R}^{r_1+r_2}_0$. Denote by $\tilde{V}_{r_1,r_2}$ the Lebesgue measure of the unit ball of the latter norm. Let $\theta_1,\ldots,\theta_{r_1+r_2-1}\in\log_E(\mathcal{O}_E^\times)$ be vectors realizing the successive minima of the lattice $\log_E(\mathcal{O}_E^\times)$ with respect to the $\|\bullet\|_\infty$ norm. By Minkowski's second theorem
\begin{equation}\label{eq:Minkowski-Theta}
\|\theta_1\|_\infty \cdots \|\theta_{r_1+r_2-1}\|_\infty \cdot \tilde{V}_{r_1,r_2}\leq 2^{r_1+r_2-1} R
\end{equation}
Denote by $\Theta\subset \log_E(\mathcal{O}_E^\times) \subset \mathbb{R}^{r_1+r_2}_0$ the lattice spanned by $\theta_1,\ldots,\theta_{r_1+r_2-1}$.
Define $$\mathcal{F}_\Theta\coloneq \left\{\sum_{j=1}^{r_1+r_2-1} \varepsilon_j \theta_j \mid 0\leq \varepsilon_j < 1 \right\}\;.$$
It is a fundamental domain for $\Theta$. This domain can be covered by exactly $\left[ \log_E(\mathcal{O}_E^\times) \colon \Theta\right]$ fundamental domains of $\log_E(\mathcal{O}_E^\times)$. We can now evaluate \eqref{eq:Z(OE)-cusp} over each of these domains to deduce
\begin{align*}
I_\lambda(s) &= \frac{1}{\operatorname{covol}{\Theta}}
\int_{\mathcal{F}_\Theta} 
\lambda_1(\tensor[^\iota]{\mathcal{O}}{_E}\exp_E(x))^{-ns} \dif^0x\\
&\gg_{s, n}\frac{|D|^{s/2}}{\operatorname{covol}{\Theta}} 
\int_{\mathcal{F}_\Theta} 
\|\exp_E(x)\|_2^{-ns} \dif^0x\;,
\end{align*}
where in the last inequality we have used the fact the the covolume of $\mathcal{O}_E$ is $2^{-r_2}\sqrt{|D|}$ and that it contains the short vector $(1,\ldots,1)$. 

Define the norm $\|y\|_{E_\infty}=\max(|y_1|_{\mathbb{R}},\ldots,|y_{r_1}|_{\mathbb{R}},|y_{r_1+1}|_{\mathbb{C}},\ldots,|y_{r_1+r_2}|_{\mathbb{C}})$.
We apply the inequality $\|\bullet\|_2\leq \sqrt{r_1+r_2} \|\bullet \|_{E_\infty}$ in $E_\infty$ and then rewrite the last integral using the basis $\theta_1,\ldots\theta_{r_1+r_2-1}$.
\begin{align}
\nonumber
\frac{1}{\operatorname{covol}{\Theta}}\int_{\mathcal{F}_\Theta}  &\|\exp_E(x)\|_2^{-ns} \dif^0x
\gg_{n,s} \frac{1}{\operatorname{covol}{\Theta}} \int_{\mathcal{F}_\Theta}  \|\exp_E(x)\|_{E_\infty}^{-ns} \dif^0 x
\geq \frac{1}{\operatorname{covol}{\Theta}}\int_{\mathcal{F}_\Theta}  \exp(-ns\|x\|_\infty) \dif^0 x\\
\nonumber
&\geq\int_0^1 \cdots \int _0^1 \exp(-ns\sum_{j+1}^{r_1+r_2-1}\varepsilon_j\|\theta_j\|_\infty )
\dif \varepsilon_1\cdots \dif \varepsilon_{r_1+r_2-1}
= \prod_{j=1}^{r_1+r_2-1} \int_0^1 \exp(-n s \varepsilon_j\|\theta_j\|_\infty) \dif \varepsilon_j\\
\label{eq:Ftheta-integral}
&= \prod_{j=1}^{r_1+r_2-1} \frac{1-\exp(-n s \|\theta_j\|_\infty)}{n s \|\theta_j\|_\infty}\;,
\end{align}
where in the second line we have used the triangle inequality for the norm $\|\bullet\|_\infty$ on $\mathbb{R}^{r_1+r_2}_0$.
We bound the denominator using Minkowski's second theorem \eqref{eq:Minkowski-Theta}. To bound the numerator we use the inequality $\|\log_E(y)\|_\infty\gg_n 1$ for every $y\in\mathcal{O}_E^\times\setminus \mu_E$, where $\mu_E<E^\times$ is the group of roots of unity. This inequality with an effective constant follows from the Northcott property, cf.\ \cite[\S1.6.15]{BombieriGubler}. The best possible bound (up to a multiplicative constant) follows from the recent breakthrough of V. Dimitrov \cite{Dimitrov} resolving the Schinzel-Zassenhaus conjecture:
\begin{equation*}
\|\log_E(y)\|_\infty\geq \frac{\log 2}{4 n}\;.
\end{equation*}
Worse bounds follow  from the result of Dobrowolski \cite{Dobrowolski} towards the Lehmer conjecture. See also Blanskby-Montgomery \cite{BlanksbyMontgomery} and Stewart \cite{Stewart}.
The claim finally follows by applying these bounds for the numerator and denominator in \eqref{eq:Ftheta-integral} and substituting into \eqref{eq:Z(OE)-cusp}.
\end{proof}

\bibliographystyle{alpha}
\bibliography{non_vanishing_bib}
\end{document}